\documentclass[11pt]{amsart}
\usepackage{amssymb}
\usepackage{color}
\usepackage{graphicx}
\usepackage{float}
\usepackage[all,cmtip]{xy}
\usepackage{tikz}

\usepackage{tikz-cd}
\usetikzlibrary{matrix}
\usepackage{url}
\usepackage{subfig}
\usepackage{hyperref}
\newcommand{\V}{\operatorname{V}_{4,2}}

\mathchardef\mhyphen="2D

\newcommand{\std}{{\operatorname{std}}}

\newcommand{\Diff}{\operatorname{Diff}}

\newcommand{\SO}{\operatorname{SO}}
\newcommand{\U}{\operatorname{U}}
\newcommand{\SU}{\operatorname{SU}}

\newcommand{\UT}{\operatorname{UT}}

\newcommand{\Id}{{\operatorname{Id}}}

\newcommand{\R}{{\mathbb{R}}}
\newcommand{\C}{{\mathbb{C}}}

\newcommand{\NS}{{\mathbb{S}}}

\newcommand{\Leg}{{\mathfrak{Leg}}}

\newcommand{\FLeg}{{\mathfrak{FLeg}}}

\newcommand{\Emb}{{\mathfrak{Emb}}}

\newtheorem{theorem}{Theorem}
\newtheorem{lemma}[theorem]{Lemma}
\newtheorem{proposition}[theorem]{Proposition}
\newtheorem{corollary}[theorem]{Corollary}

\theoremstyle{definition}
\newtheorem{definition}[theorem]{Definition}

\newtheorem{remark}[theorem]{Remark}

\begin{document} 

    \title{Families of knots that cannot be made Legendrian parametrically}

\subjclass[2020]{Primary: 53D10.}
\date{\today}

\keywords{}

\author{Javier Martínez-Aguinaga}

\begin{abstract}

The fact that every smooth knot type admits a Legendrian representative is a classical result in contact topology. However, the analogous surjectivity question was open at the parametric level. In this work we address the $n>1$ case. We prove that for every $n\geq 3$, every knot type $\mathcal K$, every Legendrian representative $\mathcal L$ and every formal Legendrian representative $\mathcal{FL}$, the associated group homomorphisms $\pi_n(\mathcal{L})\to\pi_n(\mathcal{K})$ and $\pi_n(\mathcal{FL})\to\pi_n(\mathcal{K})$ are never surjective. We then show that surjectivity at the $\pi_2$-level depends on the knot type. This work thus proves the presence of rigidity for parametric families at every higher homotopy level beyond $\pi_1$.

\end{abstract}

\maketitle
\setcounter{tocdepth}{1} 

\section{Introduction}

It is a classical result \cite[Thm. 4.4]{FT} that every smooth knot in $\NS^3$ admits a Legendrian representative. This can be understood as a $\pi_0$-surjection between the space of Legendrian embeddings $\Leg(\NS^3,\xi_\std)$ and the space of smooth embeddings $\Emb(\NS^1,\NS^3)$. This showcases that even if the notion of Legendrian knot is a priori more restrictive than the notion of classical knot, every element of the latter type can be isotoped to an element of the former type. However, this flexibility between connected components does not readily translate to the higher-homotopy level. In fact, the analogous question for higher homotopy groups remained open.

T. Kálmán posed this precise question (see \cite[p. 2016]{kalman}) at the level of $\pi_1$; i.e. he asked whether the $\pi_1$-homomorphism induced by the natural inclusion
\begin{equation}\label{NaturalInclusion}
    i:\Leg(\NS^3,\xi_\std) \hookrightarrow \Emb(\NS^1,\NS^3).
\end{equation}

was always surjective. A positive answer to this question has been provided for infinitely many knots in the three main families (hyperbolic, torus and satellites) in the recent preprint \cite{MA} but the question remains open in the general case since first proposed in 2005. 

In this work we address the $n>1$ case, showing that the group homomorphisms induced at the $\pi_n$-level are never surjective for $n\geq3$ and that, nonetheless, the surjectivity for $n=2$ strongly depends on the considered knot types. More precisely, the following results are presented.

\begin{theorem}\label{RigLeg0}    
Let $n\geq 3$. The homotopy group homomorphisms 
\begin{equation} \pi_n\left(\Leg(\NS^3,\xi_\std), \gamma\right)\rightarrow \pi_n\left(\Emb(\NS^1,\NS^3), \gamma\right)
\end{equation}

induced by the natural inclusion (\ref{NaturalInclusion})
are non-surjective for every smooth knot type and every Legendrian representative $\gamma$.
\end{theorem}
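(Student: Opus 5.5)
The plan is to reduce the statement to a comparison of homotopy groups of compact Lie groups, by combining known descriptions of the homotopy types of both spaces of knots.

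\emph{Smooth side.} By Hatcher and Budney, the component of $\Emb(\NS^1,\NS^3)$ containing a knot $\gamma$ of type $\mathcal K$ fibres (up to homotopy) as $\SO(4)\times_{\SO(2)} \mathcal K_{long}$. Here $\mathcal K_{long}$ is the corresponding component of long knots, and it is a $K(\pi,1)$. Hence for $n\geq 3$ the long exact sequences give
\[ \pi_n(\Emb,\gamma)\cong \pi_n(\SO(4))\cong \pi_n(\NS^3)\oplus\pi_n(\NS^3). \]
Equivalently, the orbit map of $\SO(4)$ acting on $\gamma$ induces an isomorphism on $\pi_n$ for $n\ge3$. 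The $\SO(2)$ quotient only affects $\pi_1,\pi_2$. For the unknot one uses instead $\Emb\simeq \V$, with $\pi_n(\V)\cong\pi_n(\NS^3)\oplus\pi_n(\NS^2)\cong \pi_n(\NS^3)^{\oplus 2}$ for $n\geq3$.

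\emph{Legendrian side.} I would invoke the analogous Legendrian result (Fernández--Martínez-Aguinaga--Presas): the component of $\Leg(\NS^3,\xi_\std)$ containing $\gamma$ is homotopy equivalent to $\U(2)\times \mathcal L_{long}$, where $\mathcal L_{long}$ is the component of long Legendrian embeddings and is aspherical (a $K(\pi,1)$). Here $\U(2)$ is the group of contactomorphisms acting on $\gamma$. Thus for $n\ge 3$, $\pi_n(\Leg,\gamma)\cong\pi_n(\U(2))\cong\pi_n(\SU(2))=\pi_n(\NS^3)$, and this group is realized by the orbit of $\gamma$ under $\SU(2)\subset \U(2)$.

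\emph{Comparison.} The key compatibility step is that both equivalences are given by orbit maps, so the inclusion $i$ on $\pi_n$, $n\ge3$, is identified with the map induced by $\SU(2)\hookrightarrow \SO(4)$. Under $\SO(4)\cong (\NS^3\times\NS^3)/\pm1$, the subgroup $\SU(2)$ is one quaternionic factor, say the left multiplications. The induced map on $\pi_n$ is therefore $\pi_n(\NS^3)\to\pi_n(\NS^3)\oplus\pi_n(\NS^3)$, $x\mapsto(x,0)$. The cokernel contains $\pi_n(\NS^3)$, so $i_*$ is non-surjective as soon as $\pi_n(\NS^3)\neq0$. By the classical theorem (Serre, with the nonvanishing for all $n\ge 3$ established via the $p$-primary components of $\pi_*(\NS^3)$), this holds for every $n\geq 3$. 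In the unknot case the same argument applies: $\SU(2)$ acts on $\V$ transitively on the first vector, so the image misses the $\pi_n(\NS^2)\cong\pi_n(\NS^3)$ summand.

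The main obstacle I expect is the compatibility step. One must check that the splittings coming from the two homotopy equivalences are natural with respect to $i$, i.e.\ that the asphericity of the long-knot spaces really kills every contribution except the Lie group orbit in degree $\ge 3$. The $\SO(2)$-quotient on the smooth side also has to be checked: it is harmless for $n\ge3$ since $\pi_n(\SO(2))=0$ for $n\ge 2$. I would handle this by writing both sides as fibrations over the orbit of the evaluation (frame) map and comparing the long exact sequences directly.
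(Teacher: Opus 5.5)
Your overall architecture is the paper's. You split $\Leg(\NS^3,\xi_\std)\simeq\U(2)\times\Leg_{N,jN}(\NS^3,\xi_\std)$, use the fibration of $\Emb(\NS^1,\NS^3)$ over $\V$ with aspherical fibre to get $\pi_n(\Emb,\gamma)\cong\pi_n(\V)\cong\pi_n(\SO(4))$ for $n\geq 3$, and reduce everything to the map out of $\pi_n(\U(2))$.

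One step, however, is unsupported. You assert that the long Legendrian component $\mathcal L_{long}$ is aspherical, so that $\pi_n(\Leg,\gamma)\cong\pi_n(\U(2))$. The product decomposition says nothing about $\pi_n(\mathcal L_{long})$. As far as I know, asphericity of long Legendrian components is not established for arbitrary knot types, and the paper never claims it.

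You do not need it. Since $\Leg_{N,jN}=\Emb_{N,jN}\cap\Leg$, the summand $\pi_n(\Leg_{N,jN},\gamma_\ell)$ is sent into $\pi_n(\Emb,\gamma)$ through $\pi_n$ of the translate $A_\gamma\cdot\Emb_{N,jN}$ of the aspherical long-knot space, hence to zero. So the image of $i_*$ equals the image of $\pi_n(\U(2))$, whatever $\pi_n(\Leg_{N,jN})$ is (Corollary~\ref{PropSurj2}). Non-surjectivity is a statement about the image only.

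With this correction your compatibility worry is also settled. The map $A\mapsto A\cdot\gamma_\ell$ is the restriction of the $\SO(4)$ orbit map. That orbit map is a $\pi_n$-isomorphism for $n\geq 3$, because its composite with the evaluation $\Emb\to\V$ is the quotient $\SO(4)\to\SO(4)/\SO(2)$.

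Where you genuinely depart from the paper is the final step. You identify the map $\pi_n(\U(2))\cong\pi_n(\SU(2))\to\pi_n(\SO(4))\cong\pi_n(\NS^3)\oplus\pi_n(\NS^3)$ as $x\mapsto(x,0)$, since $\SU(2)$ is one quaternionic factor. So the image is a direct summand with cokernel $\pi_n(\NS^3)$.

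The paper never computes the map. It uses only the abstract isomorphisms $\pi_n(\U(2))\cong\pi_n(\NS^3)$ and $\pi_n(\V)\cong\pi_n(\NS^3)\oplus\pi_n(\NS^2)\cong\pi_n(\NS^3)^{\oplus 2}$. It combines these with the algebraic fact that a nontrivial finitely generated abelian group $G$ admits no epimorphism onto $G\oplus G$ (by cardinality if $G$ is finite, by rank otherwise).

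Your route is sharper: it gives the exact image and cokernel and needs no finite generation, at the cost of the naturality check above. The paper's lemma excludes every homomorphism at once, so no identification of maps is required.

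Both arguments need $\pi_n(\NS^3)\neq 0$ for every $n\geq 3$. Note that the paper takes this from Ivanov--Mikhailov--Wu (via $\pi_n(\NS^2)\cong\pi_n(\NS^3)$), not from Serre's classical results, so your attribution should be adjusted.
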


Note that this result is sharp in the sense that it is the lowest level where one could expect such a claim to hold. Indeed, the $n=2$-case contrasts with Theorem \ref{RigLeg0}, representing an exceptional case. Analogously, the $\pi_0$-case is flexible and so is the case for infinitely many knots at the $\pi_1$-level \cite{MA}.

\begin{theorem}\label{RigLeg2}    
The homotopy group homomorphism 
\begin{equation} \pi_2\left(\Leg(\NS^3,\xi_\std), \gamma\right)\rightarrow \pi_2\left(\Emb(\NS^1,\NS^3), \gamma\right)
\end{equation}

induced by the natural inclusion (\ref{NaturalInclusion})
is non-surjective whenever $\gamma$ is an unknot and it is trivially surjective if $\gamma$ is either a non-trivial $(p,q)$-torus embedding or a hyperbolic embedding.
\end{theorem}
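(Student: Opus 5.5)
The plan is to compute both sides using Hatcher's description of the homotopy type of the components of $\Emb(\NS^1,\NS^3)$, and, on the Legendrian side, to use fibrations that reduce everything to the contactomorphism group of $(\NS^3,\xi_\std)$ and its action.

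\textbf{Surjective cases.} By Hatcher's work on the Smale conjecture and Hatcher's (and Budney's) results on spaces of knots, the component of a non-trivial $(p,q)$-torus knot has the homotopy type $\SO(4)\times_{\SO(2)\times\SO(2)}\ldots$. More precisely, it is the homotopy quotient $\SO(4)\times \NS^1\times\NS^1/\ldots$: the component of $\Emb(\NS^1,\NS^3)$ is homotopy equivalent to $\SO(4)\times_{\NS^1\times \NS^1}(\NS^1\times\NS^1)$-type spaces. Its universal cover is a product of spheres $\NS^3\times\NS^3$ (up to circle factors), so $\pi_2=0$. For hyperbolic knots, the component is $K(\pi,1)$-like fibred over $\SO(4)/\text{(finite)}$ with circle fibres (parametrisation) and a torus. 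Again $\pi_2$ vanishes, because $\pi_2(\SO(4))=0$ and the fibres are aspherical. So I would state $\pi_2(\Emb(\NS^1,\NS^3),\gamma)=0$ in both cases, using the long exact sequence of $\Diff(\NS^1)\to\Emb\to$ unparametrised knots, together with Hatcher's theorem that the unparametrised components are $\SO(4)$-homogeneous up to aspherical factors. Surjectivity onto the trivial group is then automatic.

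\textbf{Unknot.} The space of parametrised unknots deformation retracts onto the space of great circles with parametrisation, i.e. the Stiefel manifold $\V=\SO(4)/\SO(2)$. Hence $\pi_2(\Emb,\gamma)\cong\pi_2(\V)\cong\pi_2(\NS^2\times\NS^3)\cong\mathbb Z$, since $\V\simeq \NS^3\times\NS^2$ (as $\UT\NS^3$). On the Legendrian side, I would use Eliashberg's contractibility result for tight contact structures on $\NS^3$ (the contactomorphism group is homotopy equivalent to $\U(2)$), together with the fibration $\mathrm{Cont}(\NS^3,\xi_\std)\to\Leg\to\ldots$ with the space of Legendrian unknots of given rotation/tb. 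The goal is to show that the Legendrian unknot component with $tb=-1$ is homotopy equivalent to $\U(2)/\ldots\simeq \UT$-type of dimension $3+1$ (for instance $\U(2)$ acting transitively on Legendrian great circles, with stabiliser trivial up to reparametrisation). This gives $\pi_2=0$, so the map to $\mathbb Z$ is not surjective. For stabilised unknots, I would instead argue through the forgetful map to formal Legendrians: the map factors through $\pi_2(\FLeg)$, and the composite to $\pi_2(\V)=\pi_2(\NS^2)$ is induced by a map which on the $\NS^2$-factor (the tangent direction) is constrained to lie in the contact plane. The tangent direction map $\NS^3\times\NS^1\to \NS^2$ then lands in the circle bundle of $\xi$, whose fibre $\NS^1$ kills $\pi_2$ contributions, so the image misses a generator.

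\textbf{Main obstacle.} The hardest step is the last one: checking that the $\pi_2$-generator of $\V$ is detected by the tangent-direction factor, and that Legendrian, or even formal Legendrian, families cannot realise it because $\xi_\std$ is trivial and its unit sphere bundle is $\NS^3\times\NS^1$. I would verify this by composing with the evaluation $\V\to \UT\NS^3\cong\NS^3\times\NS^2$ at the base point, and noting that formal Legendrians map into $\NS^3\times\NS^1\subset\NS^3\times\NS^2$, which has $\pi_2=0$.
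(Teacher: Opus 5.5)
Your surjective cases agree with the paper: both rest on Hatcher's computation that $\pi_2(\Emb^\gamma)=0$. Your description of the homotopy types is muddled, though. For torus knots the unparametrised component actually has $\pi_2\cong\mathbb Z$, so the vanishing comes from the connecting map to $\pi_1(\Diff^+(\NS^1))$, not from ``aspherical fibres''. The clean input is $\Emb^\gamma\simeq\SO(4)$ for torus knots and $\Emb^\gamma\simeq\SO(4)\times\NS^1$ for hyperbolic knots, both with vanishing $\pi_2$.

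For the unknot, the idea in your last paragraph is correct and is essentially the paper's argument:
\begin{itemize}
\item Long unknots are contractible, so the evaluation $\gamma\mapsto\bigl(\gamma(0),\gamma'(0)/\|\gamma'(0)\|\bigr)$ identifies $\pi_2(\Emb^\gamma)$ with $\pi_2(\V)\cong\mathbb Z$.
\item For a genuine Legendrian this frame lies in $\U(2)$, the unit circle bundle of $\xi_\std$, and $\pi_2(\U(2))=0$.
\end{itemize}
The paper packages the same mechanism as the splitting $\Leg\simeq\U(2)\times\Leg_{N,jN}$ together with Corollary \ref{PropSurj2}.

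The route you actually lay out for the unknot, however, contains a false step and unnecessary machinery.

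For stabilised unknots you pass through $\FLeg$ and assert that formal Legendrians map into $\NS^3\times\NS^1\subset\NS^3\times\NS^2$. That is false: for $(\gamma,F_s)$ only $F_1(0)$ is required to lie in $\xi_\std$, while $\gamma'(0)$ is arbitrary. A formal-Legendrian argument would need two extra ingredients, as in the proof of Theorem \ref{t28}:
\begin{itemize}
\item the free homotopy $s\mapsto\bigl(\gamma^k(0)\,|\,F^k_s(0)/\|F^k_s(0)\|\bigr)$ into $\U(2)$;
\item the simplicity of $\U(2)$ and $\V$, to pass from free to based classes.
\end{itemize}

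The Eliashberg-based claim that the $\mathrm{tb}=-1$ component is $\simeq\U(2)$ is a deep theorem that you would be citing, not proving. It is also not needed: you only need the image of $\pi_2(\Leg^\gamma)$ in $\pi_2(\Emb^\gamma)$ to vanish, not $\pi_2(\Leg^\gamma)$ itself.

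The simple fix is to drop both detours and apply the evaluation directly to genuine Legendrian families, where $\gamma'(0)\in\xi_\std$ holds on the nose. This handles every Legendrian unknot uniformly and removes the case split.
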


These results follow from carefully studying the algebraic topology underlying the space of Legendrian embeddings in $(\NS^3,\xi_\std)$ and establishing a comparison with the existing homotopy descriptions of the smooth embedding spaces, extensively studied by A. Hatcher \cite{hatcher2, hatcher3} and by R. Budney \cite{Budney2, Budney3}. 

A completely parallel picture arises when we compare the homotopy type of the space of formal Legendrian knots $\FLeg(\NS^3,\xi_\std)$ with the space of smooth embeddings $\Emb(\NS^1,\NS^3)$. 
The study of homotopical properties of the space of formal Legendrian embeddings has seen much recent activity. This space can be understood as the space of objects that captures the algebraic topology underlying the space of Legendrians. We refer the reader to   \cite{Mur, FMP1}. 

Global topological aspects of the space of formal Legendrians are deeply related to the global topology of the space of Legendrian embeddings. For instance, the homotopy groups of this space can be understood as \textit{formal} invariants in the space of actual Legendrian embeddings. The study of the topology of this space is thus meaningful from the perspective of Legendrian knot spaces. We conclude with the following rigidity result at the higher homotopy-level for the space of formal Legendrian knots.

\begin{theorem} Let $n\geq 3$. The homotopy group homomorphisms 
\begin{equation}\label{map}
\pi_n\left(\FLeg(\NS^3,\xi_\std), (\gamma,F_s)\right)\rightarrow \pi_n\left(\Emb(\NS^1,\NS^3), \gamma\right)
\end{equation}

induced by the forgetful map  $\FLeg(\NS^3,\xi_\std)\rightarrow \Emb(\NS^1,\NS^3)$
are non-surjective for every smooth embedding $\gamma$ and every formal Legendrian representative $(\gamma,F_s)$.
\end{theorem}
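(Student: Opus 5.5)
The plan is to identify the homotopy type of $\FLeg(\NS^3,\xi_\std)$ in terms of $\Emb(\NS^1,\NS^3)$, and then show that the forgetful map on $\pi_n$ factors through a group that cannot surject onto $\pi_n(\Emb)$ for $n\geq 3$. A formal Legendrian is a pair $(\gamma,F_s)$, where $\gamma$ is a smooth embedding and $F_s$ is a homotopy of monomorphisms $T\NS^1\to T\NS^3$ joining $\gamma'$ to a map into $\xi_\std$. Since $\xi_\std$ is trivial, the forgetful map $\FLeg\to\Emb$ is a fibration. Its fiber over $\gamma$ is a space of paths in the space of framings along the loop. After trivializing $T\NS^3$ by the $\SU(2)$ frame adapted to $\xi_\std$, this fiber is the homotopy fiber of an inclusion of free loop spaces
\[
\Lambda(\NS^1)\hookrightarrow \Lambda(\NS^2),
\]
where $\NS^1\subset\NS^2$ is the equator of unit vectors tangent to $\xi$. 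Up to homotopy, then, $\FLeg\simeq \Emb\times_{\Lambda\NS^2}\Lambda\NS^1$: it is the homotopy pullback of $\Emb\to\Lambda\NS^2$ (via the unit tangent vector) along $\Lambda\NS^1\to\Lambda\NS^2$.

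In this pullback description the map $\pi_n(\FLeg)\to\pi_n(\Emb)$ is surjective if and only if the composite $\pi_n(\Emb)\to\pi_n(\Lambda\NS^2)\to\pi_{n-1}(\mathrm{hofib})$ vanishes, by exactness of the long exact sequence. The homotopy fiber of $\Lambda\NS^1\to\Lambda\NS^2$ is itself built from $\Omega\NS^2$-type pieces. So the key computation is to show that the tangent-direction map $\pi_n(\Emb,\gamma)\to\pi_n(\Lambda\NS^2)$ is nonzero modulo the image of $\pi_n(\Lambda\NS^1)$, which is zero for $n\geq 2$ since $\Lambda\NS^1\simeq \NS^1\times\mathbb Z$.

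To produce such classes, I would use Hatcher and Budney: every component of $\Emb(\NS^1,\NS^3)$ splits off a factor of the Stiefel manifold $\V\simeq \SO(4)/\SO(2)$ acting by rotations, i.e.\ $\Emb\simeq \V\times \Emb_{\mathrm{long}}$ up to homotopy. The orbit map $\SO(4)\to\Emb$, $A\mapsto A\circ\gamma$, composed with the unit tangent vector at a basepoint of $\gamma$, gives the evaluation $\SO(4)\to \UT\NS^3\cong \NS^3\times\NS^2$. In the $\SU(2)$-trivialization this is essentially the projection $\SO(4)\cong(\NS^3\times\NS^3)/\pm1\to\NS^2$ through one $\NS^3$ factor followed by the Hopf map. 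Composing further with evaluation at the basepoint $\Lambda\NS^2\to\NS^2$, the map on $\pi_n$ is the Hopf map $\NS^3\to\NS^2$ precomposed with elements of $\pi_n(\NS^3)$. Since $\pi_n(\NS^3)\to\pi_n(\NS^2)$ is an isomorphism for $n\geq3$ and $\pi_n(\NS^3)\neq0$ for $n\ge 3$, I then need a class in $\pi_n(\SO(4))$ whose image in $\pi_n(\NS^3)$ is nontrivial. This holds because $\SO(4)\to\NS^3$ has a section. Its image in $\pi_n(\Emb,\gamma)$ therefore cannot lift to $\pi_n(\FLeg)$, because it maps nontrivially to $\pi_n(\NS^2)$, which is reached through $\pi_n(\Lambda\NS^2)$, while anything lifted factors through $\pi_n(\Lambda \NS^1)=0$.

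The main obstacle is making the trivialization and the identification with the Hopf map precise, so that the nontrivial class really survives. Formal Legendrians are defined up to the chosen homotopy $F_s$, and one must check that the relevant obstruction is exactly the tangent-direction class in $\pi_n(\NS^2)$, independent of the basepoint $(\gamma,F_s)$. I also need to justify that $\pi_n(\Lambda\NS^1)\to\pi_n(\Lambda\NS^2)$ is zero for $n\ge 2$ and that evaluation detects the class. For $n=2$ the argument fails, consistent with Theorem~\ref{RigLeg2}, because $\pi_2(\SO(4))=0$.
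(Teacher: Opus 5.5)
Your argument is correct, but it follows a different route from the paper.

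\textbf{The paper's route.} The paper never describes $\FLeg(\NS^3,\xi_\std)$ globally. It proceeds in four steps:
\begin{enumerate}
\item It uses the asphericity of long-knot components to get $\pi_n(\Emb(\NS^1,\NS^3),\gamma)\cong\pi_n(\V)$ for $n\geq 3$, so a class is read off from $(\gamma(0),\gamma'(0))$.
\item It pushes this $t=0$ data along $F_s(0)$ into $\U(2)\subset\V$ by a free homotopy.
\item It uses that $\U(2)$ and $\V$ are simple spaces to pass back to based classes.
\item It concludes with an algebraic lemma: there is no epimorphism $G\to G\oplus G$ for a nontrivial finitely generated abelian group $G$.
\end{enumerate}

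\textbf{Your route.} You write $\FLeg$ as the homotopy pullback of $\Emb\to\Lambda\NS^2\leftarrow\Lambda\NS^1$. Since $\pi_n(\Lambda\NS^1)=0$ for $n\geq 2$, the image of $\pi_n(\FLeg)\to\pi_n(\Emb)$ is exactly the kernel of the tangent-direction map $\pi_n(\Emb,\gamma)\to\pi_n(\Lambda\NS^2)$. This description does not depend on $F_s$. You then exhibit explicit classes that do not lift: take $\alpha\in\pi_n(\NS^3)$ and push it through $q\mapsto L_q\circ\gamma$, with $L_q$ left quaternionic multiplication. In the $\xi$-adapted trivialization, its tangent direction at $t=0$ is the Hopf map $q\mapsto q\,w_0\,\bar q$.

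\textbf{What each buys.} Your route avoids the asphericity input, the free-versus-based homotopy step and the group-theoretic lemma, and it identifies which classes fail to lift. The paper's route is quicker to set up and needs no pullback or fibration bookkeeping. Both rest on the same deep input: $\pi_n(\NS^3)\neq 0$ for every $n\geq 3$ (Ivanov--Mikhailov--Wu). You should cite this explicitly rather than state it in passing.

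\textbf{A false side claim.} You say components of $\Emb(\NS^1,\NS^3)$ split as $\V\times\Emb_{\mathrm{long}}$. This is false, because the fibration over $\tilde{\operatorname{V}}_{4,2}$ is twisted. For example, a nontrivial torus-knot component is homotopy equivalent to $\SO(4)$, with $\pi_1=\mathbb Z/2$, whereas $\V\times\NS^1$ has $\pi_1=\mathbb Z$. You never use this splitting; only the orbit map $\SO(4)\to\Emb$ is needed, so drop that sentence.

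\textbf{The $n=2$ remark.} Your closing remark only shows that the $\SO(4)$ construction fails at $n=2$. Your kernel criterion still applies there, and for the unknot it shows that the formal $\pi_2$-map is not surjective.
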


\textbf{Acknowledgements:} I would like to thank Mark Grant and Gustavo Granja for useful remarks. The author acknowledges support from PID2022-142024NB
I00 by MICINN (Spain).

\section{Preliminaries on Legendrian embeddings}

\subsection{Standard $3$--dimensional space.}

We will focus our study on the standard contact structures, both on $\R^3$ and on $\NS^3$. These are defined as follows.
The standard contact structure on $\R^3(x,y,z)$ is defined by $\xi_\std=\ker(dz-ydx)$. The standard contact structure on $\NS^3\subseteq \C^2(z_1,z_2)$ is defined as 
        \begin{equation}\label{eq1} \xi_\std =T\NS^3 \cap iT\NS^3=\ker\left(\frac{i}{2}\sum_{j=1}^2 z_jd\bar{z}_j-\bar{z}_j dz_j\right),
        \end{equation}
        where $i:T\C^2\rightarrow T\C^2$ stands for the standard complex structure. Additionally, there exists a well known relationship between these two contact structures.

\begin{remark}\label{contStereo}
    It is well known that $(\R^3,\xi_\std)$ is contactomorphic to $(\NS^3\backslash\{p\},\xi_{\std_{|\NS^3\backslash\{p\}}})$, for any point $p\in\NS^3$ (see \cite[Proposition 2.1.8]{GeigesBook}) via the contact stereographic projection. 
\end{remark}

\subsection{Legendrian embeddings.}

\begin{definition}
	Let $(M,\xi)$ be a contact $3$--manifold. An embedded oriented circle $L\subseteq M$ is said to be a \textbf{Legendrian knot} if it is everywhere tangent to the distribution; i.e. $TL\subseteq\xi$. A \textbf{Legendrian embedding} is any embedding that parametrises a Legendrian knot. 
\end{definition}

Denote by $\widehat{\Leg}(M,\xi)$ the space of Legendrian submanifolds of $(M,\xi)$ and by $\Leg(M,\xi)$ the space of Legendrian embeddings of $(M,\xi)$. Note that $\widehat{\Leg}(M,\xi)=\Leg(M,\xi)/\Diff^+(\NS^1)$, where $\Diff^+(\NS^1)$ stands for the space of orientation-preserving diffeomorphisms of $\NS^1$.

\subsection{Smooth embeddings in $\NS^3$ and long embeddings in $\R^3$.}

\begin{definition}\label{LongEmbeddings}
    Write $\Emb_{N,jN}(\NS^1,\NS^3)$ for the space of smooth embeddings $\gamma(t)$ in $\NS^3$ such that at $t=0$  they pass through the north pole $N=(1,0)\in\C^2$ with fixed derivative $j\cdot N=(0,1)\in\C^2$ (where we are using quaternionic notation). We will call this space the space of long knots. Precisely:
$$\Emb_{N,jN}(\NS^1,\NS^3)=\{\gamma\in\Emb(\NS^1,\NS^3):\gamma(0)=N,\gamma'(0)=jN\}.$$ 
\end{definition}

\begin{remark}\label{aspherical} Every component of the space $\Emb_{N,jN}(\NS^1,\NS^3)$) is a $K(\pi, 1)$-space \cite[Thm 1.2(1)]{BudCohen}. This follows from the work of A. Hatcher \cite{hatcher2, hatcher3}. Equivalently, we have that $\pi_n\left(\Emb_{N,jN}(\NS^1,\NS^3)\right)=\{0\}$ for $n\geq 2$. 
\end{remark}

\begin{remark}[Notation]\label{notation}
Henceforth, when we write a superscript $\gamma$ over any embedding space in this article, we are denoting the connected component of that space containing 
 the embedding $\gamma$. For instance, $\Emb^\gamma(\NS^1,\NS^3)$ denotes the connected component of the space of smooth embeddings $\Emb(\NS^1,\NS^3)$ containing the embedding $\gamma$.
\end{remark}

Following \cite{Budney2, Budney3, BudCohen}, we consider the following  smooth fibration
\begin{equation}\label{eq:LongEmbeddingsFibration}
\Emb_{N,jN}(\NS^1,\NS^3)\hookrightarrow\Emb(\NS^1,\NS^3)\rightarrow \tilde{\operatorname{V}}_{4,2},
\end{equation}
where $\tilde{\operatorname{V}}_{4,2}:=\V\times(0,\infty)$ and $\V$ denotes the Stiefel manifold of $2$-frames in $\R^4$. The fibration map projects an embedding $\gamma\in\Emb(\NS^1,\NS^3)$ onto the element $\left((\gamma(0),\frac{\gamma'(0)}{||\gamma'(0)||}),||\gamma'(0)||\right)\in \tilde{\operatorname{V}}_{4,2}=\V\times(0,\infty) \cong \SO(4)/\SO(2) \times \R$. The fiber over $\left((N, jN), 1\right)\in\tilde{\operatorname{V}}_{4,2}$ is the space $\Emb_{N,jN}(\NS^1,\NS^3)$ and all the fibers are homotopically equivalent. The $(0,\infty)$-factor in the base $\tilde{\operatorname{V}}_{4,2}$ just codifies the norm of the projected derivative but note that, up to homotopy, this is completely irrelevant by the contractibility of $\R$.

This fibration relates the topology of the space of smooth embeddings in $\NS^3$ with the topology of the space of long embeddings.

\begin{remark}\label{StiefelS3S2}
    The topology of the Stiefel manifold $\V$; equivalently, $SO(4)/SO(2)$, is well understood. This manifold coincides with $\UT\NS^3$, the unit tangent bundle of $\NS^3$. Furthermore, note that since $\NS^3$ is parallelisable, we can identify $\UT\NS^3$ with $\NS^3\times\NS^2$. Thus, we have the homotopy equivalences $\tilde{\operatorname{V}}_{4,2}\cong \V\cong\SO(4)/\SO(2)\cong \NS^3\times\NS^2$.
\end{remark}

    \begin{remark} The group $\SO(4)$ acting on $\NS^3$ naturally acts on the space of embeddings by restriction. This is the $\SO(4)$ from the quotient in the base of Fibration (\ref{eq:LongEmbeddingsFibration}). Finally, the $\SO(2)$ in the quotient of Fibration (\ref{eq:LongEmbeddingsFibration})  identifies with the stabilizer within $\SO(4)$ of the $2$-frame $(N,jN)\in\V$.
    \end{remark}

\subsection{Legendrian embeddings and long Legendrians}
We have an analogous situation in the contact setting, as already pointed out in \cite{FMP2, FM}.

The homotopy type of the space of Legendrian embeddings in the standard $(\NS^3,\xi_\std)$ is intimately related to the space of long Legendrian embeddings $\Leg_{N,jN}(\NS^3,\xi_\std):=\Emb_{N,jN}(\NS^1,\NS^3)\cap\Leg(\NS^3,\xi_\std).$ 

\begin{lemma}\label{normalisation}
    The space $\Leg(\NS^3,\xi_\std)$ is homotopy equivalent to ${\Leg}_*(\NS^3,\xi_\std):=\{\gamma\in\Leg(\NS^3,\xi_\std):||\gamma'(0)||=1\}$.
\end{lemma}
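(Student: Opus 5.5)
We prove that the inclusion ${\Leg}_*(\NS^3,\xi_\std)\hookrightarrow\Leg(\NS^3,\xi_\std)$ is a homotopy equivalence by writing down an explicit deformation retraction. The idea is to reparametrise each Legendrian embedding so that its speed at $t=0$ becomes $1$. Reparametrisations preserve the image, so every intermediate embedding remains Legendrian. This is the advantage over rescaling in the target, which would not preserve the contact condition.

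Concretely, for $\gamma\in\Leg(\NS^3,\xi_\std)$ set $\lambda(\gamma)=||\gamma'(0)||>0$. This is a continuous positive function on $\Leg(\NS^3,\xi_\std)$ in the $C^\infty$ topology. Fix once and for all a smooth family of orientation-preserving diffeomorphisms $\phi_c\in\Diff^+(\NS^1)$, indexed by $c\in(0,\infty)$, with the following properties:
\begin{itemize}
\item[(i)] $\phi_c(0)=0$;
\item[(ii)] $\phi_c'(0)=c$;
\item[(iii)] $\phi_1=\Id$;
\item[(iv)] $(c,t)\mapsto\phi_c(t)$ is smooth.
\end{itemize}
For instance, take $\NS^1=\R/\mathbb{Z}$ and $\phi_c(t)=t+\frac{c-1}{2\pi}\,\rho(c)\sin(2\pi t)$. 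Here $\rho(c)$ is chosen so that $\phi_c'(t)>0$ and $\rho(c)\frac{c-1}{2\pi}\cdot 2\pi=c-1$. One cannot use this naive form for all $c$ at once, since $c-1$ can be arbitrarily large. Instead, define $\phi_c$ as the time-one flow of a vector field on $\NS^1$ vanishing at $0$ with derivative $\log c$ there, namely the vector field $\log(c)\,\frac{\sin(2\pi t)}{2\pi}\partial_t$. Its flow fixes $0$, its linearisation at $0$ is multiplication by $e^{\log c}=c$, it depends smoothly on $c$, and it equals $\Id$ at $c=1$. This construction handles all $c>0$ uniformly and is the cleanest choice.

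Define the homotopy
$$H_s(\gamma)=\gamma\circ\phi_{\lambda(\gamma)^{-s}},\qquad s\in[0,1].$$
Each $H_s(\gamma)$ is an embedding with the same oriented image as $\gamma$, so it is again Legendrian. It satisfies $H_s(\gamma)(0)=\gamma(0)$ and $H_s(\gamma)'(0)=\lambda(\gamma)^{-s}\gamma'(0)$. At $s=0$ we have $H_0=\Id$. At $s=1$ the speed at $0$ equals $1$, so $H_1$ lands in ${\Leg}_*(\NS^3,\xi_\std)$. If $\gamma$ already lies in ${\Leg}_*$, then $\lambda(\gamma)=1$, so $\phi_1=\Id$ and $H_s(\gamma)=\gamma$ for all $s$. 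Hence $H$ is a strong deformation retraction of $\Leg(\NS^3,\xi_\std)$ onto ${\Leg}_*(\NS^3,\xi_\std)$.

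The main point requiring care is continuity of $H$ in the $C^\infty$ topology. This follows because composition $\Emb\times\Diff(\NS^1)\to\Emb$ is continuous, $c\mapsto\phi_c$ is continuous into $\Diff^+(\NS^1)$ (the flow depends smoothly on the parameter), and $\lambda$ is continuous. Alternatively, the argument can be phrased via the fibration over the $(0,\infty)$-factor: the $(0,\infty)$ rescaling acts freely through reparametrisations, which shows that the $(0,\infty)$-factor in the base of Fibration (\ref{eq:LongEmbeddingsFibration}) is homotopically irrelevant in the Legendrian setting as well.
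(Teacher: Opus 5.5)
Your proof is correct and takes essentially the same approach as the paper: you precompose each Legendrian with a circle diffeomorphism that fixes $0$ and has derivative $||\gamma'(0)||^{-1}$ there, which preserves the Legendrian condition. The only difference is technical: your flow-generated family satisfies $\phi_1=\Id$, so the path $c=\lambda(\gamma)^{-s}$ gives a strong deformation retraction directly, whereas the paper uses a compactly supported family and the straight-line homotopy $(1-s)t+s\,\phi_{r_\gamma}(t)$ to show $\varphi\circ\iota\simeq\Id$ and $\iota\circ\varphi\simeq\Id$.
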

\begin{proof}

Identify $\NS^1$ with the interval $[-1,1]$ with identified endpoints. Take a smooth family of diffeomorphisms $\{\phi_r\}_{r>0}:\NS^1\to\NS^1$ such that\footnote{This family always exists. For instance, take any family of functions $f_r:[-1,1]\to\R^+$ such that: i) $f_r(t)=1$ for $t\notin[\frac{-1}{4},\frac{1}{4}]$, ii) $f_r(0)=\frac{1}{r}$ and iii) $\int_{\frac{-1}{4}}^0 f_r(s)ds=\int_{0}^\frac{1}{4} f_r(s)ds=\frac{1}{4}$ and just define $\phi_r(t):=\frac{-1}{4}+\int_{\frac{-1}{4}}^tf_r(s)ds$. Finally identify $[-1,1]_{-1\sim1}$ with $\NS^1$.}:
\[ \phi_r(t)=t \text{ for }t\notin\left(\frac{-1}{4},\frac{1}{4}\right),\quad\phi_r(0)=0\quad\text{ and }\quad||\phi'_r(0)||=\frac{1}{r}.\]
Write $r_\gamma:=||\gamma'(0)||$ and define the following map:
 \begin{equation}\label{projection}
	\begin{array}{rccl}
\varphi:\Leg(\NS^3,\xi_\std)&\longrightarrow&\Leg_*(\NS^3,\xi_\std)\\
	\gamma(t) & \mapsto &\tilde{\gamma}(t):=\gamma(\phi_{r_\gamma}(t)).
	\end{array}
	\end{equation}
    Consider the natural inclusion $\iota:\Leg_*(\NS^3,\xi_\std)\to\Leg(\NS^3,\xi_\std)$ and define the following homotopy in $\Leg(\NS^3,\xi_\std)$: $\Gamma_s(\gamma(t)):=\gamma((1-s)\cdot t+s\cdot\phi_{r_\gamma}(t))$, $s\in[0,1]$. It readily follows that it restricts to a homotopy within $\Leg_*(\NS^3,\xi_\std)$. By means of this homotopy we conclude  $\varphi\circ\iota\cong \Id|_{\Leg_*(\NS^3,\xi_\std)}$ and $\iota\circ\varphi\cong \Id|_{\Leg(\NS^3,\xi_\std)}$, thus yielding the claim.\end{proof}

There is a homotopical decomposition of the space of Legendrians (see \cite[Sec. 7.4]{FMP2}) that we will introduce in the following proposition. 

\begin{proposition}\label{lem:HomotopyEquivalence}
	The space of Legendrian embeddings in  $(S^3,\xi_\std)$ admits the following homotopy equivalence:
\begin{equation}\label{eq:HomotopyLeg}
	\Leg(\NS^3,\xi_\std)\cong \U(2)\times \Leg_{N,jN}(\NS^3,\xi_\std).
    \end{equation}
\end{proposition}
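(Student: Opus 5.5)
By Lemma \ref{normalisation} it suffices to prove the statement for $\Leg_*(\NS^3,\xi_\std)$, the Legendrians with $\|\gamma'(0)\|=1$. The plan is to show that $\U(2)$ acts simply transitively on the possible initial data $(\gamma(0),\gamma'(0))$ of such Legendrians, and then to use this action to split off the long Legendrians. The relevant group is $\U(2)\subseteq\SO(4)$, acting on $\C^2$ by unitary matrices. This action preserves $\NS^3$ and the standard complex structure $i$, hence preserves $\xi_\std=T\NS^3\cap iT\NS^3$. It therefore acts by contactomorphisms, and so acts on $\Leg_*(\NS^3,\xi_\std)$ by post-composition.

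\textbf{Step 1: the space of initial data.} Consider the evaluation map
\[ ev:\Leg_*(\NS^3,\xi_\std)\to \mathcal{V}_\xi:=\{(p,v): p\in\NS^3,\ v\in\xi_p,\ \|v\|=1\},\qquad \gamma\mapsto(\gamma(0),\gamma'(0)). \]
We show that $\U(2)$ acts simply transitively on $\mathcal{V}_\xi$. Given $(p,v)\in\mathcal{V}_\xi$, the condition $v\in\xi_p=T_p\NS^3\cap iT_p\NS^3$ means that $v$ is Hermitian-orthogonal to $p$: indeed $\langle v,p\rangle_\R=0$ and $\langle v,ip\rangle_\R=0$. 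So $(p,v)$ is a unitary frame of $\C^2$. Consequently there is a unique $A\in\U(2)$ with $Ae_1=p$ and $Ae_2=v$; with $N=e_1$ and $jN=e_2$, this is the unique $A$ sending $(N,jN)$ to $(p,v)$. Hence $\mathcal{V}_\xi\cong\U(2)$ via $A\mapsto(AN,AjN)$.

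\textbf{Step 2: the trivialisation.} Define
\[ \Phi:\U(2)\times\Leg_{N,jN}(\NS^3,\xi_\std)\to\Leg_*(\NS^3,\xi_\std),\qquad (A,\gamma)\mapsto A\circ\gamma. \]
Its inverse is $\gamma\mapsto(A_\gamma,\,A_\gamma^{-1}\circ\gamma)$, where $A_\gamma\in\U(2)$ is the unique element with $A_\gamma(N,jN)=(\gamma(0),\gamma'(0))$. This inverse is well defined because $\U(2)$ preserves both the contact condition and the norm $\|\gamma'(0)\|=1$, so $A_\gamma^{-1}\circ\gamma$ lies in $\Leg_{N,jN}(\NS^3,\xi_\std)$.

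\textbf{Step 3: continuity.} The inverse is continuous because $A_\gamma$ depends continuously on $\gamma$: its columns are exactly $\gamma(0)$ and $\gamma'(0)$ in $\C^2$, and these vary continuously in the $C^\infty$ topology. This gives a homeomorphism $\Leg_*\cong\U(2)\times\Leg_{N,jN}$, and combined with Lemma \ref{normalisation} the claim follows.

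The only delicate point is the identification in Step 1 that $\xi_p$ equals the Hermitian orthogonal complement of $p$. This follows from the formula (\ref{eq1}): the form there equals $\operatorname{Im}\langle\cdot,\cdot\rangle$ evaluated against the point, up to sign.
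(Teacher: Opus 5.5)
Your proof is correct and follows the paper's argument. You reduce to $\Leg_*(\NS^3,\xi_\std)$ via Lemma \ref{normalisation}, observe that $(\gamma(0)\,|\,\gamma'(0))\in\U(2)$ because $\xi_\std$ is the field of complex tangencies, and use $\gamma\mapsto(A_\gamma,A_\gamma^{-1}\cdot\gamma)$ with inverse $(A,\beta)\mapsto A\cdot\beta$. Your explicit checks — that $\xi_p$ is the Hermitian orthogonal complement of $p$, and that on $\Leg_*(\NS^3,\xi_\std)$ these maps are mutually inverse homeomorphisms — make precise what the paper leaves implicit.
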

\begin{proof} Up to homotopy, we can assume that the elements $\gamma\in\Leg(\NS^3,\xi_\std)$ satisfy $||\gamma'(0)||=1$ by Lemma \ref{normalisation}. Consider the following natural projection:
 \begin{equation}\label{projection}
	\begin{array}{rccl}
\rho:\Leg(\NS^3,\xi_\std)&\longrightarrow&\U(2)\\
	\gamma & \mapsto &A_\gamma:=\left(\gamma(0) \phantom{,}|\phantom{,} \gamma'(0)\right).
	\end{array}
	\end{equation}

 Note that $A_\gamma:=\left(\gamma(0) \phantom{,}|\phantom{,} \gamma'(0)\right)$ lies in $\U(2)$ because $\xi_\std$ is defined as the distribution given by the complex tangencies (Equation (\ref{eq1})). 
 
 Finally, consider the following homotopy equivalence that does the work:
\begin{equation}\label{eq:HomotopyDescompLegS3}
	\begin{array}{rccl}
\Phi:\Leg(\NS^3,\xi_\std)&\longrightarrow&\U(2)\times \Leg_{N,jN}(\NS^3,\xi_\std)\\
	\gamma & \mapsto &(A_\gamma,\, 
 A_{\gamma}^{-1}\cdot\gamma).
	\end{array}
	\end{equation} 

In order to conclude, just note that the following map provides a homotopy inverse:
\begin{equation}\label{eq:HomotopyDescompLegS3Inverse}
	\begin{array}{rccl}
\Psi:\U(2)\times \Leg_{N,jN}(\NS^3,\xi_\std)&\longrightarrow&\Leg(\NS^3,\xi_\std)\\
	(A,\beta) & \mapsto &A\cdot\beta.
	\end{array}
	\end{equation} \end{proof}

\begin{remark}
    By means of Proposition \ref{lem:HomotopyEquivalence}, the embedding $\gamma \in\Leg(\NS^3,\xi_\std)$ has a naturally associated long embedding  $\gamma_\ell:=A^{-1}_\gamma\cdot\gamma\in\Leg_{N,jN}(\NS^3,\xi_\std)$. This will often be used in the notation. 
\end{remark}

\subsection{Homotopy-group homomorphisms}

The homotopical decomposition of the space of Legendrian embeddings in $(\NS^3,\xi_\std)$ from Proposition \ref{lem:HomotopyEquivalence} induces the following decomposition at the level of homotopy groups.
\begin{equation}\label{eq:HomotopyLeg1}
	\pi_n\left(\Leg(\NS^3,\xi_\std),\gamma\right) = \pi_n\left(\U(2),A_\gamma\right)\oplus \pi_n\left(\Leg_{N,jN}(\NS^3,\xi_\std),\gamma_{\ell}\right).
	\end{equation} 
    
We can consider the following trivial fibration via the projection $\rho$ from  Expression (\ref{projection}):
\begin{equation}\label{eq:LongLegendriansFibration}
\Leg_{N,jN}(\NS^3,\xi_\std)\hookrightarrow\Leg(\NS^3,\xi_\std)\rightarrow \U(2).
\end{equation}
If we assume that, up to homotopy, the elements  $\gamma\in\Leg(\NS^3,\xi_\std)$ satisfy $||\gamma'(0)||=1$ (Recall Lemma \ref{normalisation}) then this fibration is naturally related to  Fibration (\ref{eq:LongEmbeddingsFibration}). Indeed, if we consider the inclusion $\iota:\U(2)\hookrightarrow\tilde{\operatorname{V}}_{4,2},\,A\mapsto(A,1)$, then we get the following commutative diagram at the level of homotopy groups:
\[
\begin{tikzcd}\label{conmutativo}
\pi_n\left(\Leg_{N,jN}(\NS^3,\xi_\std),\gamma_\ell\right) \arrow[r] \arrow[d]
  & \pi_n\left(\Leg(\NS^3,\xi_\std),\gamma\right)  \arrow[r] \arrow[d] & \pi_n(\U(2),A_\gamma) \arrow[d]\\
\pi_n\left(\Emb_{N,jN}(\NS^1,\NS^3),\gamma_\ell\right) \arrow[r]
  & \pi_n\left(\Emb(\NS^1,\NS^3),\gamma\right)\arrow[r] & \pi_n(\tilde{\operatorname{V}}_{4,2})
\end{tikzcd}
\]

In particular, we have that the group homomorphisms
\begin{equation}\label{PInMorphism}
\pi_n\left(\Leg(\NS^3,\xi_\std),\gamma\right)\rightarrow\pi_n\left(\Emb(\NS^1,\NS^3),\gamma\right)
\end{equation}

yield, by restriction to the long components, the group homomorphisms
\begin{equation}\label{LongDec}
\pi_n\left(\Leg_{N,jN}(\NS^3,\xi_\std),\gamma_\ell\right)\rightarrow\pi_n\left(\Emb_{N,jN}(\NS^1,\NS^3),\gamma_\ell\right).
\end{equation}

\begin{remark}
   Following Remark \ref{notation}, we will write $\Leg^\gamma(\NS^3, \xi_\std)$ and $\Leg^\gamma_{N,jN}(\NS^3, \xi_\std)$ to denote the connected component containing a Legendrian embedding $\gamma\in\Leg(\NS^3, \xi_\std)$ (respectively, long Legendrian embedding $\gamma\in \Leg_{N,jN}(\NS^3, \xi_\std)$).
\end{remark}

Notice that the long smooth components are aspherical (Remark \ref{aspherical}), i.e. the homotopy groups $\pi_n\left(\Emb_{N,jN}(\NS^1,\NS^3),\gamma_\ell\right)$ are trivial for $n\geq 2$ (and thus have no contribution onto their long smooth part). Therefore, this implies the following result.

\begin{proposition} Let $\gamma\in\Leg(\NS^3,\xi_\std)$ and $n\geq 2$. The $\pi_n$-homomorphisms induced by the natural inclusion $\pi_n\left(\Leg(\NS^3,\xi_\std), \gamma\right)\rightarrow \pi_n\left(\Emb(\NS^1,\NS^3), \gamma\right)$ have exactly the same image as the associated group homomorphisms $\pi_n\left(\U(2), A_\gamma\right)\rightarrow\pi_n\left(\Emb(\NS^1,\NS^3),\gamma\right)$ 
induced by the inclusion of the first summand in Decomposition (\ref{eq:HomotopyLeg1}) into $\Leg(\NS^3,\xi_\std)$. 
\end{proposition}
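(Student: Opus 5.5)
The plan is to use the splitting of Proposition \ref{lem:HomotopyEquivalence} and linearity of the induced homomorphism. By Decomposition (\ref{eq:HomotopyLeg1}), every class $x\in\pi_n(\Leg(\NS^3,\xi_\std),\gamma)$ can be written as $x=a+b$. Here $a$ comes from $\pi_n(\U(2),A_\gamma)$ and $b$ comes from $\pi_n(\Leg_{N,jN}(\NS^3,\xi_\std),\gamma_\ell)$. Both summands are viewed inside $\pi_n(\Leg(\NS^3,\xi_\std),\gamma)$ through the map $\Psi$. Concretely, $a$ is represented by the family $A_s\cdot\gamma_\ell$ and $b$ by $A_\gamma\cdot\beta_s$. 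Since $n\geq 2$, all groups are abelian and the induced map $i_*$ is a homomorphism, so $i_*(x)=i_*(a)+i_*(b)$. It therefore suffices to prove $i_*(b)=0$ for every $b$ in the long summand. Once that holds, $\operatorname{Im}(i_*)=i_*(\pi_n(\U(2),A_\gamma))$, which is exactly the claim.

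To prove $i_*(b)=0$, I would use the left square of the commutative diagram above. The composite from $\pi_n(\Leg_{N,jN}(\NS^3,\xi_\std),\gamma_\ell)$ to $\pi_n(\Emb(\NS^1,\NS^3),\gamma)$ factors through $\pi_n(\Emb_{N,jN}(\NS^1,\NS^3),\gamma_\ell)$. By Remark \ref{aspherical}, this group vanishes for $n\geq2$, so the composite is zero.

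Basepoints need a little care. The long summand sits inside $\Leg$ via $\beta\mapsto A_\gamma\cdot\beta$, which sends $\gamma_\ell$ to $\gamma$. Correspondingly, the smooth long fiber should be the fiber over $(A_\gamma,1)$, namely $A_\gamma\cdot\Emb_{N,jN}(\NS^1,\NS^3)$. This fiber is homeomorphic to $\Emb_{N,jN}(\NS^1,\NS^3)$ via the $\SO(4)$-action, since $\U(2)\subset\SO(4)$. It is therefore still aspherical, and the diagram commutes because the inclusion $\Leg\hookrightarrow\Emb$ is $\U(2)$-equivariant.

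The main point to check is this equivariance and basepoint bookkeeping: that the homotopy equivalence $\Psi$ intertwines the long inclusions with the fiber inclusion of Fibration (\ref{eq:LongEmbeddingsFibration}). I expect it to be routine, given that $\Psi$ is just the restriction of the $\SO(4)$-action. Lemma \ref{normalisation} handles the normalization $\|\gamma'(0)\|=1$ needed to land in the fiber over $(A_\gamma,1)$.
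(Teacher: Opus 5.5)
Your proposal is correct and follows the paper's own argument. The paper likewise combines Decomposition (\ref{eq:HomotopyLeg1}) with the asphericity of the long smooth components (Remark \ref{aspherical}), which forces the long Legendrian summand to map to zero in $\pi_n(\Emb(\NS^1,\NS^3),\gamma)$. Your use of the translated fiber $A_\gamma\cdot\Emb_{N,jN}(\NS^1,\NS^3)$ over $(A_\gamma,1)$ makes the basepoint bookkeeping explicit, which the paper's diagram leaves implicit.
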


In particular, we get the following corollary.

\begin{corollary}\label{PropSurj2} Let $\gamma\in\Leg(\NS^3,\xi_\std)$. The $\pi_n$-homomorphisms 
\begin{equation} \pi_n\left(\Leg(\NS^3,\xi_\std), \gamma\right)\rightarrow \pi_n\left(\Emb(\NS^1,\NS^3), \gamma\right)
\end{equation}
induced by the natural inclusion are surjective for $n\geq 2$ if and only if the  group homomorphisms $\pi_n\left(\U(2),A_\gamma\right)\rightarrow\pi_n\left(\Emb(\NS^1,\NS^3),\gamma\right)$ given by restriction to the first summand in Decomposition (\ref{eq:HomotopyLeg1}) are surjective for $n\geq 2$.
\end{corollary}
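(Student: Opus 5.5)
The corollary follows directly from the preceding proposition, so I will simply unpack the equality of images it provides.

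Fix $n\geq 2$ and write
\[
f:\pi_n\left(\Leg(\NS^3,\xi_\std),\gamma\right)\to\pi_n\left(\Emb(\NS^1,\NS^3),\gamma\right)
\]
for the map induced by the natural inclusion. Write $g$ for its restriction to the first summand $\pi_n(\U(2),A_\gamma)$ of Decomposition (\ref{eq:HomotopyLeg1}).

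I would first recall why the two images agree. By Decomposition (\ref{eq:HomotopyLeg1}), every class in the source of $f$ is a sum $a+b$, with $a\in\pi_n(\U(2),A_\gamma)$ and $b\in\pi_n(\Leg_{N,jN}(\NS^3,\xi_\std),\gamma_\ell)$. Since $f$ is a homomorphism ($n\geq 2$, so everything is abelian), $f(a+b)=g(a)+f(b)$.

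The class $b$ is the image of an element of the long Legendrian space. By the commutative diagram, $f(b)$ therefore factors through $\pi_n(\Emb_{N,jN}(\NS^1,\NS^3),\gamma_\ell)$. This group vanishes for $n\geq 2$ by Remark \ref{aspherical}, so $f(b)=0$.

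It follows that $\operatorname{Im} f=\operatorname{Im} g$, which is exactly the content of the preceding proposition. Surjectivity of a homomorphism means its image is the whole target $\pi_n(\Emb(\NS^1,\NS^3),\gamma)$. Since $f$ and $g$ have the same target and the same image, $f$ is surjective if and only if $g$ is. This gives both directions of the corollary.

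The only step needing care is the vanishing of $f(b)$. It uses that the inclusion of the second summand into $\Leg(\NS^3,\xi_\std)$, via the homotopy inverse $\Psi$ of Proposition \ref{lem:HomotopyEquivalence}, is compatible with the inclusion of long embeddings into $\Emb(\NS^1,\NS^3)$. Concretely, $\Psi(A_\gamma,\beta)=A_\gamma\cdot\beta$, and $A_\gamma$ acts on $\Emb$ by a diffeomorphism that is isotopic to the identity, because $\U(2)$ is connected. Hence this map is homotopic to the inclusion of the long embedding space composed with that action. This is the main point to verify; after that, the rest is formal.
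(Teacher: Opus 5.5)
Your proposal is correct and follows essentially the paper's route: Decomposition (\ref{eq:HomotopyLeg1}) together with the commutative diagram and the asphericity of $\Emb_{N,jN}(\NS^1,\NS^3)$ (Remark \ref{aspherical}) kills the long summand's contribution, so the two images coincide and surjectivity of one map is equivalent to surjectivity of the other. Your extra care with $\Psi$ is welcome, though the isotopy of $A_\gamma$ to the identity is not needed: $\beta\mapsto A_\gamma\cdot\beta$ already factors through $\pi_n(\Emb_{N,jN}(\NS^1,\NS^3),\gamma_\ell)=0$.
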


\section{Rigidity for higher homotopy groups}

The  $\pi_n$-homotopy group homomorphisms induced by the natural inclusion (\ref{NaturalInclusion}) provide, altogether, information about the global topology of the smooth and Legendrian embedding spaces, respectively, and how they relate to each other. We will show the lack of surjectivity for homotopy groups beyond $n=2$. The $n=2$ case will be treated separately. 

    \subsection{Surjectivity at the $\pi_2$-level} Surjectivity at the $\pi_2$-level holds trivially for some  components. For instance, it follows from Theorem \ref{thm:HatcherTorusKnots}  below that $\pi_2(Emb^\gamma(\NS^1,\NS^3))$ is the trivial group if $\gamma$ is either a hyperbolic embedding or a  non-trivial $(p,q)$-torus embedding and, thus, surjectivity follows trivially. On the contrary, note that if $\gamma$ is the unknot, then there is no $\pi_2$-surjectivity  since $\pi_2(Emb^\gamma(\NS^1,\NS^3),\gamma)=\pi_2\left(\SO(4)/\SO(2)\right)=\mathbb{Z}$, whereas $\pi_2(\U(2))=\{0\}$ (and the claim follows by Corollary \ref{PropSurj2}). This readily proves Theorem \ref{RigLeg2} from the Introduction.

Following the exposition from \cite[Sec 1.1]{HK}, we focus our attention on the following result that combines several celebrated results from the work of A. Hatcher and which is well known in the theory of smooth embedding spaces.

\begin{theorem}\label{thm:HatcherTorusKnots} (Hatcher, \cite{hatcher2, hatcher3}, see also \cite[Sec 1.1]{HK}). Take a smooth embedding $\gamma\in\Emb^{\gamma}(\NS^1,\NS^3)$ and an associated long embedding $\gamma_\ell\in\Emb^{\gamma}(\NS^1,\NS^3)$. The following homotopy equivalences hold.
   \begin{itemize}
   
       \item[i)] If $\gamma$ is the unknot, then $\Emb^{\gamma}(\NS^1,\NS^3)\cong \V$ and $\Emb_{N,jN}^{\gamma_\ell}(\NS^1,\NS^3)$ is contractible.
       \item[ii)] If $\gamma$ is a non-trivial $(p,q)$-torus embedding, $\Emb^{\gamma}(\NS^1,\NS^3)\cong\SO(4)$ and $\Emb_{N,jN}^{\gamma_\ell}(\NS^1,\NS^3)\cong\NS^1$.
       \item[iii)] If $\gamma$ is a hyperbolic, $\Emb^{\gamma}(\NS^1,\NS^3)\cong\SO(4)\times\NS^1$ and $\Emb_{N,jN}^{\gamma_\ell}(\NS^1,\NS^3)\cong\NS^1\times\NS^1$.
\end{itemize}

 \end{theorem}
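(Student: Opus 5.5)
This statement is quoted from Hatcher's work, so the plan is to assemble it from known results rather than prove it from scratch. The backbone is the fibration (\ref{eq:LongEmbeddingsFibration}), $\Emb_{N,jN}\hookrightarrow \Emb\to \tilde{\operatorname{V}}_{4,2}\simeq \V$. We also use the equivariant description of Hatcher: the component $\Emb^\gamma(\NS^1,\NS^3)$ is homotopy equivalent to $\SO(4)\times_{G} \{\text{pt}\}$, i.e. to $\SO(4)/G$, where $G$ is a compact Lie subgroup of the isometries acting on a preferred representative. The three cases are then obtained by identifying $G$ and computing the fiber of the evaluation map.

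\textbf{Unknot.} By Hatcher's proof of the Smale conjecture and its consequence for unknotted circles, the space of unparametrised unknots deformation retracts onto the great circles, $\SO(4)/O(2)\times O(2)$ up to orientation. Adding parametrisations, which contributes $\Diff^+(\NS^1)\simeq \SO(2)$, gives $\Emb^\gamma\simeq \SO(4)/\SO(2)\cong\V$. Comparing with (\ref{eq:LongEmbeddingsFibration}), the evaluation map $\Emb^\gamma\to\V$ is then a homotopy equivalence, so its fiber $\Emb^{\gamma_\ell}_{N,jN}$ is contractible.

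\textbf{Torus knots.} The complement of a nontrivial $(p,q)$-torus knot is Seifert fibered. Hatcher's results on spaces of incompressible surfaces and on Seifert manifolds imply that the symmetry group of a standard $(p,q)$-torus knot, an $\SO(2)$ acting by $(z_1,z_2)\mapsto(e^{iq\theta}z_1,e^{ip\theta}z_2)$, is a deformation retract of the space of unparametrised knots modulo $\SO(4)$. The unparametrised space is therefore $\SO(4)/\SO(2)$, up to a finite quotient by symmetries that is absorbed once orientations are fixed. Parametrising adds another $\SO(2)$ that is identified with the rotation action, which yields $\Emb^\gamma\simeq \SO(4)$. The long fiber of $\SO(4)\to \V=\SO(4)/\SO(2)$ is $\SO(2)\simeq\NS^1$.

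\textbf{Hyperbolic knots.} By Mostow rigidity together with Hatcher's theorem that the diffeomorphism groups of Haken manifolds are homotopy equivalent to their isometry groups, the knot complement has finite symmetry group. Hence the unparametrised space is homotopy equivalent to $\SO(4)$, up to finite covers that are trivialised by orientation data. Parametrisation contributes a free $\NS^1$ factor, giving $\SO(4)\times\NS^1$. The long fiber is $\NS^1\times\NS^1$: the $\SO(2)$ stabiliser together with the reparametrisation circle. This is consistent with Budney's $K(\pi,1)$ description, where $\pi_1=\mathbb{Z}^2$.

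\textbf{Main obstacle.} The hardest step is the torus and hyperbolic cases. One must verify that the equivalences are compatible with the fibration, so that the long-knot fibers are identified correctly. In particular, the extension problem in the long exact sequence has to be resolved, for instance whether $\pi_1$ of the fiber is $\mathbb{Z}$ rather than a finite quotient. I would handle this by using Remark \ref{aspherical} together with Budney's computation of $\pi_1$ of long knot spaces, as recorded in \cite[Sec 1.1]{HK}, instead of redoing Hatcher's geometric arguments.
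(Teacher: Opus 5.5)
The paper gives no proof of this theorem; it simply quotes Hatcher, with a pointer to [HK, Sec.~1.1]. Relying on Hatcher, as you do, is therefore consistent with the paper, and all your final answers are correct. The derivation you sketch, however, contains steps that are false as written.

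\textbf{The organising claim.} Your claim $\Emb^\gamma(\NS^1,\NS^3)\simeq\SO(4)/G$ with $G$ compact cannot hold in case iii). For compact $G$ the group $\pi_1(\SO(4)/G)$ is finite, being an extension of $\pi_0(G)$ by a quotient of $\mathbb{Z}/2$, while $\pi_1(\SO(4)\times\NS^1)$ is infinite. The correct uniform model is for \emph{unparametrised} oriented knots: $\widehat{\mathcal K}\simeq\SO(4)/G$, where $G$ is the stabiliser of a symmetric representative $\gamma_0$. Hence $\Emb^\gamma(\NS^1,\NS^3)\simeq\SO(4)\times_G\Diff^+(\NS^1)$ via $(A,\phi)\mapsto A\circ\gamma_0\circ\phi$, with $G$ acting on $\Diff^+(\NS^1)\simeq\NS^1$ through its action on the knot. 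All three cases are decided by that action. For torus knots, the point is that the circle $(z_1,z_2)\mapsto(e^{iq\theta}z_1,e^{ip\theta}z_2)$ acts on $K$ with degree one, which uses $\gcd(p,q)=1$. A degree-$d$ action would give $\SO(4)/\mathbb{Z}_d$ rather than $\SO(4)$, and your phrase ``identified with the rotation action'' hides exactly this check.

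\textbf{The hyperbolic case.} This paragraph contains a genuine error. The finite symmetry group is not ``trivialised by orientation data.'' A hyperbolic knot with a cyclic period (a rotation about an axis disjoint from $K$) has a nontrivial symmetry preserving the orientations of both $\NS^3$ and $K$. So the oriented unparametrised space is $\SO(4)/G$ with $G\neq 1$, not $\SO(4)$. Even over a base $\simeq\SO(4)$, ``parametrisation contributes a free $\NS^1$ factor'' would still require showing that the circle bundle is trivial.

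What actually yields $\SO(4)\times\NS^1$ is that $G$ acts freely on $K$ by rotations. A finite-order orientation-preserving homeomorphism of a circle with a fixed point is the identity, and by the Smith conjecture no nontrivial symmetry fixes a knotted $K$ pointwise. Consequently $\SO(4)\times_G\NS^1$ is the mapping torus, over $\NS^1/G$, of a right translation of $\SO(4)$, and that translation is isotopic to the identity.

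The input you need from Hatcher is also more than finiteness of the symmetry group. You need that the group of diffeomorphisms of $\NS^3$ preserving $K$ has contractible components; only that gives $\widehat{\mathcal K}\simeq\SO(4)/G$.

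\textbf{The long fibres.} With the correct model, the long fibres ($\SO(2)$, respectively $\SO(2)\times\NS^1$) can be read off directly from the evaluation map. No extension problem arises. Invoking the asphericity remark or [HK] at that stage would amount to citing the very theorem you are trying to prove.
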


\subsection{The $n\geq 3$ case}

We will devote the rest of the section to tackling the $n\geq 3$ case.

The fibration (\ref{eq:LongEmbeddingsFibration})
$\Emb_{N,jN}(\NS^1,\NS^3)\hookrightarrow\Emb(\NS^1,\NS^3)\rightarrow \tilde{\operatorname{V}}_{4,2}\cong\SO(4)/\SO(2)$
induces a long exact sequence at the level of homotopy groups: \begin{displaymath}
	\xymatrix@M=3pt{
		& \cdots\ar[r] & \pi_{n+1} (\SO(4)/\SO(2))\ar[dll]\\
		\pi_n (\Emb_{N,jN}(\NS^1,\NS^3),\gamma_\ell) \ar[r] & \pi_n (\Emb(\NS^1,\NS^3),\gamma) \ar[r] & \pi_{n}(\SO(4)/\SO(2))\ar[dll] \\
		\pi_{n-1} (\Emb_{N,jN}(\NS^1,\NS^3),\gamma_\ell) \ar[r] & \cdots}
	\end{displaymath}
    
The following sequences are exact for $n\geq 3$ by taking into account Remark \ref{aspherical}.
\begin{equation}
	\xymatrix@M=10pt{
		0\ar[r] & \pi_n (\Emb(\NS^1,\NS^3),\gamma)\ar[r] & \pi_{n}(\SO(4)/\SO(2)) \ar[r] & 0,} \quad n\geq 3.
	\label{ExactSequence}
\end{equation}
This readily yields the following result.
\begin{proposition}\label{HigherHomotopy}
    The following group isomorphisms hold
    for every $n\geq 3$.   
    \begin{equation}\label{isomorfismoNudosSO4}
        \pi_n (\Emb^\gamma(\NS^1,\NS^3),\gamma)\cong \pi_{n}(\SO(4)/\SO(2)).
    \end{equation}
    
\end{proposition}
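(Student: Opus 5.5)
The plan is to read off the isomorphism from the long exact sequence of Fibration (\ref{eq:LongEmbeddingsFibration}), restricted to the connected component $\Emb^\gamma(\NS^1,\NS^3)$. First I restrict the fibration to this component. The projection to $\tilde{\operatorname{V}}_{4,2}$ is still a fibration there, because the base is connected and the $\SO(4)$-action (together with the reparametrisation used in Lemma \ref{normalisation}) moves fibers to fibers. The fiber is then a union of components of $\Emb_{N,jN}(\NS^1,\NS^3)$, one of which contains $\gamma_\ell$. The only relevant homotopy groups of the fiber are those based at $\gamma_\ell$. By Remark \ref{StiefelS3S2}, the base $\tilde{\operatorname{V}}_{4,2}$ is homotopy equivalent to $\SO(4)/\SO(2)\cong \NS^3\times\NS^2$.

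Next I write out the relevant segment of the long exact sequence for $n\geq 3$:
\[
\pi_n(\Emb_{N,jN},\gamma_\ell)\to \pi_n(\Emb^\gamma,\gamma)\to \pi_n(\SO(4)/\SO(2))\to \pi_{n-1}(\Emb_{N,jN},\gamma_\ell).
\]
By Remark \ref{aspherical}, every component of the space of long knots is a $K(\pi,1)$, so $\pi_k(\Emb_{N,jN},\gamma_\ell)=0$ for all $k\geq 2$. Since $n\geq 3$, both $n$ and $n-1$ are at least $2$, so the two outer terms vanish. This gives the short exact sequence (\ref{ExactSequence}), and hence the isomorphism $\pi_n(\Emb^\gamma(\NS^1,\NS^3),\gamma)\cong\pi_n(\SO(4)/\SO(2))$, induced by the projection $\gamma\mapsto(\gamma(0),\gamma'(0)/\|\gamma'(0)\|)$.

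There is essentially no hard step. The one point that needs care is that $n=2$ is genuinely excluded: there the next term is $\pi_1$ of the fiber, which need not vanish, and the boundary map $\pi_2(\SO(4)/\SO(2))\to\pi_1(\Emb_{N,jN})$ can be nonzero. The second point of care is the first step, namely checking that the map really is a (Serre) fibration on the chosen component. For this I would use local sections over $\SO(4)/\SO(2)$ given by lifting to $\SO(4)$ and acting on $\NS^3$, which yields local triviality.
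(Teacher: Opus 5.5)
Your proposal is correct and is essentially the paper's proof. Both use the long exact sequence of Fibration (\ref{eq:LongEmbeddingsFibration}), in which the fiber terms in degrees $n$ and $n-1\geq 2$ vanish by the asphericity of the long-knot components (Remark \ref{aspherical}), giving the short exact sequence (\ref{ExactSequence}); your extra care (restricting to the component of $\gamma$, local triviality via local sections of $\SO(4)\to\SO(4)/\SO(2)$ plus reparametrisation, and why $n=2$ is excluded) fills in details the paper leaves implicit.
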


 Combining Proposition 
\ref{HigherHomotopy} and Remark \ref{StiefelS3S2},
we conclude the following lemma.
\begin{lemma}\label{lemma:Clave_pin} Let $n\geq 3$. Then the following group isomorphisms hold: \[\pi_n\left(\Emb(\NS^1,\NS^3),\gamma\right)\cong \pi_n(\SO(4)/\SO(2)\times(0,\infty))\cong\pi_n(\SO(4)/\SO(2))\cong \pi_n(\V)\cong \pi_n(\NS^3)\oplus\pi_n(\NS^2).\]
where the first isomorphism is induced by the map from fibration (\ref{eq:LongEmbeddingsFibration}) and the second isomorphism is simply induced by the projection $\SO(4)/\SO(2)\times(0,\infty)\to\SO(4)/\SO(2)$.
\end{lemma}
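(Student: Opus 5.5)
The plan is to chain four isomorphisms, each coming from a result already stated or a standard fact.

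\textbf{Step 1.} The first isomorphism is Proposition \ref{HigherHomotopy}. It comes from the exact sequence (\ref{ExactSequence}), which is the long exact sequence of Fibration (\ref{eq:LongEmbeddingsFibration}) with base $\tilde{\operatorname{V}}_{4,2}=\SO(4)/\SO(2)\times(0,\infty)$. For $n\geq 3$ both flanking terms $\pi_n(\Emb_{N,jN}(\NS^1,\NS^3),\gamma_\ell)$ and $\pi_{n-1}(\Emb_{N,jN}(\NS^1,\NS^3),\gamma_\ell)$ vanish by Remark \ref{aspherical}, since $n-1\geq 2$. Hence the map induced by the fibration projection is an isomorphism onto $\pi_n(\tilde{\operatorname{V}}_{4,2})$. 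This is the isomorphism claimed to be induced by the fibration map.

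\textbf{Step 2.} The second isomorphism is induced by the projection $\SO(4)/\SO(2)\times(0,\infty)\to\SO(4)/\SO(2)$. This projection is a homotopy equivalence because $(0,\infty)$ is contractible; concretely, $\pi_n$ of a product splits and $\pi_n((0,\infty))=0$.

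\textbf{Step 3.} The third isomorphism, $\SO(4)/\SO(2)\cong\V$, is the identification of the Stiefel manifold as a homogeneous space. $\SO(4)$ acts transitively on orthonormal $2$-frames, and the stabiliser of $(N,jN)$ is the $\SO(2)$ rotating the orthogonal complement, as in the remarks following Fibration (\ref{eq:LongEmbeddingsFibration}).

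\textbf{Step 4.} For the last isomorphism, use Remark \ref{StiefelS3S2}. The map sending a frame $(p,v)$ to $(p,v)$ identifies $\V$ with $\UT\NS^3$. Left-translating $v$ to the identity with the quaternionic (parallelising) frame gives a diffeomorphism $\UT\NS^3\cong\NS^3\times\NS^2$, namely $(p,v)\mapsto(p,\bar p v)$, with $\bar p v$ a unit purely imaginary quaternion. Then $\pi_n(\NS^3\times\NS^2)\cong\pi_n(\NS^3)\oplus\pi_n(\NS^2)$ by the product formula for homotopy groups.

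\textbf{Main obstacle.} Essentially nothing is hard here; the only point needing care is basepoints. The fibration projection sends $\gamma$ to its frame, so the isomorphisms hold at the corresponding basepoints. Since $\SO(4)/\SO(2)$ is connected, and $\pi_1(\NS^3\times\NS^2)=0$ so the action of $\pi_1$ on higher homotopy groups is trivial, basepoints are harmless and we may drop them in the last three groups.
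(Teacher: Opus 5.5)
Your proposal is correct and follows the paper's route: the paper obtains the lemma simply by combining Proposition~\ref{HigherHomotopy} (the long exact sequence of Fibration~(\ref{eq:LongEmbeddingsFibration}), whose long-knot fibre is aspherical) with Remark~\ref{StiefelS3S2} ($\V\cong\UT\NS^3\cong\NS^3\times\NS^2$). You additionally spell out the steps the paper leaves implicit: the contractibility of $(0,\infty)$, the homogeneous-space identification $\SO(4)/\SO(2)\cong\V$, and the explicit quaternionic trivialisation $(p,v)\mapsto(p,\bar p v)$.
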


The following lemma is an algebraic result that will be key in order to prove our main theorem.

\begin{lemma}\label{LackSurjectivity}
    Every group homomorphism $\rho: \pi_n\left(\U(2)\right) \to \pi_n(\NS^3)\oplus\pi_n(\NS^2)$ is non-surjective if $n\geq 3$.
\end{lemma}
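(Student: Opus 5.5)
The plan is to compute both sides in terms of the single group $G_n:=\pi_n(\NS^3)$, and then show that no group can surject onto two copies of itself under the relevant finiteness/rank constraints.

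\textbf{Step 1: identify the source.} As a manifold $\U(2)\cong \NS^1\times \SU(2)\cong \NS^1\times\NS^3$, via $A\mapsto(\det A,\operatorname{diag}(\det A^{-1},1)\cdot A)$. Since $\NS^1$ has trivial higher homotopy,
\[\pi_n(\U(2))\cong \pi_n(\NS^3)=G_n \quad\text{for } n\geq 2.\]

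\textbf{Step 2: identify the target.} The Hopf fibration $\NS^1\to\NS^3\to\NS^2$ and its long exact sequence give $\pi_n(\NS^2)\cong\pi_n(\NS^3)$ for $n\geq 3$. Hence the target $\pi_n(\NS^3)\oplus\pi_n(\NS^2)$ of Lemma \ref{LackSurjectivity} is isomorphic to $G_n\oplus G_n$. The statement then reduces to the following purely algebraic claim: there is no surjective homomorphism $G_n\to G_n\oplus G_n$ when $n\geq 3$.

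\textbf{Step 3: split into two cases.}

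For $n=3$ we have $G_3=\mathbb{Z}$. A homomorphism $\mathbb{Z}\to\mathbb{Z}^2$ has cyclic image, since it is generated by the image of $1$. But $\mathbb{Z}^2$ is not cyclic; for instance, its tensor with $\mathbb{Z}/2$ is $(\mathbb{Z}/2)^2$. So no such map is surjective.

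For $n\geq 4$, Serre's theorem tells us that $G_n$ is a finite abelian group. If $G_n\neq 0$, then $|G_n\oplus G_n|=|G_n|^2>|G_n|$, so by cardinality no map $G_n\to G_n\oplus G_n$ can be surjective.

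\textbf{Main obstacle.} The one genuinely nontrivial input is that $\pi_n(\NS^3)\neq 0$ for every $n\geq 4$. If $G_n$ vanished for some $n$, the zero map $0\to 0$ would be surjective and the lemma would fail. I would not try to prove this from scratch. Instead I would invoke the known theorem that $\pi_n(\NS^3)$ is nontrivial for all $n\geq 2$ (Ritchie; see also the work of Selick and Gray on $p$-torsion in $\pi_*(\NS^3)$), and combine it with the finiteness from Step 3 to conclude.
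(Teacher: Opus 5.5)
Your proof is correct and follows essentially the same route as the paper: both sides are reduced to $G=\pi_n(\NS^3)$ via $\U(2)\simeq\NS^1\times\SU(2)$ (the paper uses the determinant fibration) and the Hopf fibration, and a surjection $G\to G\oplus G$ is then ruled out by rank (your $n=3$ case) or by cardinality (your $n\geq 4$ case, using Serre finiteness), with nontriviality of $G$ as the one deep input. One small correction: $\pi_2(\NS^3)=0$, so the nontriviality statement you invoke should read $n\geq 3$. The paper obtains it from the Ivanov--Mikhailov--Wu theorem that $\pi_n(\NS^2)\neq 0$ for $n\geq 2$, transported through the Hopf isomorphism you already have in Step 2.
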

\begin{proof}

First, note that the determinant $\det:\U(m)\to\NS^1$ yields a fibration $\SU(m)\hookrightarrow\U(m)\to\NS^1$ for every $m\geq 2$ and, in particular, for our case of interest $m=2$. By considering the associated long exact sequence of homotopy groups, we conclude that $\pi_n(\U(2))\cong \pi_n(\SU(2))$ for $n\geq 2$.

On the other hand, $\SU(2)$ is diffeomorphic to $\NS^3$. Therefore, $\pi_n(\U(2))\cong \pi_n(\NS^3)$ for $n\geq 2$ and so the proof boils down to showing that there cannot exist any group epimorphism $f:\pi_n\left(\NS^3\right)\to
\pi_n\left(\NS^3\right)\oplus\pi_n\left(\NS^2\right)$ for $n\geq 3$.

The Hopf fibration $\NS^1\hookrightarrow \NS^3 \to \NS^2$ yields a long exact sequence of homotopy groups that allows us to conclude that $\pi_n(\NS^2)\cong\pi_n(\NS^3)$ for $n\geq 3$. Therefore, if we show that there cannot exist a group epimorphism $\tau: \pi_n(\NS^2)\to\pi_n(\NS^2)\oplus\pi_n(\NS^2)$ for $n\geq 3$ we will be done. 

We know that homotopy groups of spheres are abelian and finitely generated \cite{Serre}. On the other hand, all higher homotopy groups of $\NS^2$ are non-trivial by the work  \cite{IMW} of S. O. Ivanov, R. Mikhailov and J. Wu; i.e. $\pi_n(\NS^2)\neq\{0\}$ for $n\geq 2$. 

Finally, note that it is not possible to have a group epimorphism between a non-trivial abelian, finitely generated group $G$ and $G\oplus G$. Indeed, if $G$ is purely torsion, the claim follows purely by cardinality reasons. If $G$ has a non-zero free component, then the lack of surjectivity readily follows because of rank differences. Therefore, the claim follows.
\end{proof}

We finally conclude this subsection with Theorem \ref{RigLeg}. This will follow from all the arguments and constructions developed until this point.

\begin{theorem}\label{RigLeg}    
Let $n\geq 3$. The homotopy group homomorphisms 
\begin{equation} \pi_n\left(\Leg(\NS^3,\xi_\std), \gamma\right)\rightarrow \pi_n\left(\Emb(\NS^1,\NS^3), \gamma\right)
\end{equation}

induced by the natural inclusion (\ref{NaturalInclusion})
are non-surjective for every smooth knot type and every Legendrian representative $\gamma$.
\end{theorem}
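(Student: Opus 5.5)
The plan is to reduce the statement to the algebraic Lemma \ref{LackSurjectivity}, using Corollary \ref{PropSurj2} to discard the long Legendrian summand and Lemma \ref{lemma:Clave_pin} to identify the target.

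Fix $n\geq 3$, a smooth knot type and a Legendrian representative $\gamma$. By Corollary \ref{PropSurj2}, the homomorphism $\pi_n(\Leg(\NS^3,\xi_\std),\gamma)\to\pi_n(\Emb(\NS^1,\NS^3),\gamma)$ is surjective if and only if the restriction
\[
j_n:\pi_n(\U(2),A_\gamma)\to\pi_n(\Emb(\NS^1,\NS^3),\gamma)
\]
to the first summand of Decomposition (\ref{eq:HomotopyLeg1}) is surjective. This uses the asphericity of the long smooth components (Remark \ref{aspherical}): the contribution of $\pi_n(\Leg_{N,jN}(\NS^3,\xi_\std),\gamma_\ell)$ factors through $\pi_n(\Emb_{N,jN}(\NS^1,\NS^3),\gamma_\ell)=0$. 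So it suffices to show that $j_n$ is not surjective.

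Next, compose $j_n$ with the isomorphism of Lemma \ref{lemma:Clave_pin},
\[
\pi_n(\Emb(\NS^1,\NS^3),\gamma)\cong\pi_n(\NS^3)\oplus\pi_n(\NS^2),
\]
which is induced by the fibration map of (\ref{eq:LongEmbeddingsFibration}) followed by the identifications of Remark \ref{StiefelS3S2}. Since we compose with an isomorphism, $j_n$ is surjective if and only if the composite
\[
\rho:\pi_n(\U(2))\to\pi_n(\NS^3)\oplus\pi_n(\NS^2)
\]
is surjective. A change of basepoint inside the connected group $\U(2)$ does not affect this, because $\pi_n$ is abelian and basepoint changes give isomorphisms. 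Lemma \ref{LackSurjectivity} states that no group homomorphism of this form is surjective for $n\geq 3$, so $\rho$ is not surjective. Hence $j_n$ is not surjective, and neither is the original homomorphism.

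The only step needing care is checking that Corollary \ref{PropSurj2} really applies with the image of $j_n$. Its image equals the image of the full Legendrian homomorphism because the homotopy equivalence $\Psi$ of Proposition \ref{lem:HomotopyEquivalence} is compatible with the inclusion into $\Emb$. The commutative diagram preceding Corollary \ref{PropSurj2} also shows that the relevant map $\pi_n(\U(2))\to\pi_n(\tilde{\operatorname{V}}_{4,2})$ is just induced by $\iota$. That identification is not strictly needed, though: Lemma \ref{LackSurjectivity} is purely algebraic and forbids surjectivity for \emph{any} homomorphism. The argument is therefore independent of the knot type and of the Legendrian representative, which gives the statement for all of them.
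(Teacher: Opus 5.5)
Your proposal is correct and is essentially the paper's own proof. You reduce to the $\U(2)$ summand via Corollary \ref{PropSurj2}, using that the long Legendrian summand factors through the aspherical long smooth component. You then identify $\pi_n(\Emb(\NS^1,\NS^3),\gamma)$ with $\pi_n(\NS^3)\oplus\pi_n(\NS^2)$ via Lemma \ref{lemma:Clave_pin}, and conclude with the purely algebraic Lemma \ref{LackSurjectivity}. Your extra remarks on basepoints and on the compatibility of $\Psi$ with the inclusion only spell out what the proposition preceding Corollary \ref{PropSurj2} already records.
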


\begin{proof}
By Corollary \ref{PropSurj2}, the claim boils down to checking that the morphisms $\pi_n\left(\U(2)\right)\rightarrow\pi_n\left(\Emb(\NS^1,\NS^3)\right)$ cannot be surjective for $n\geq 3$. Since $\pi_n (\Emb^\gamma(\NS^1,\NS^3),\gamma)\cong \pi_{n}(\V)\cong\pi_n(\NS^3)\oplus\pi_n(\NS^2)$ by Proposition \ref{HigherHomotopy} and Lemma \ref{lemma:Clave_pin}, then the claim follows by Lemma \ref{LackSurjectivity}.
\end{proof}

\subsection{The formal case}
Let us introduce Formal Legendrian embeddings first.

\begin{definition}
    A \textbf{formal Legendrian embedding} in $(\NS^3,\xi_\std)$ is a pair $(\gamma,F_s)$ satisfying: 
		\begin{itemize}
			\item [(i)] $\gamma:\NS^1\rightarrow\NS^3$ is an embedding.
			\item [(ii)] $F_s:\NS^1\rightarrow \gamma^*(T\NS^3\setminus\lbrace 0\rbrace)$ is a $1$--parametric smooth family of vector fields along $\gamma$, $s\in[0,1]$, such that $F_0=\gamma'$ and $F_1(t)\in\xi_{\gamma(t)}$.
		\end{itemize}
  We denote the space of formal Legendrian embeddings in $(\NS^3,\xi_\std)$ by $\FLeg(\NS^3,\xi_\std)$.
\end{definition}

Note that there is a natural forgetful map 
\begin{equation}\label{eq:forgetful}
\FLeg(\NS^3,\xi_\std)\rightarrow\Emb(\NS^1,\NS^3),\quad (\gamma,F_s)\mapsto \gamma
\end{equation}
that induces group homomorphisms
$\pi_n\left(\FLeg(\NS^3,\xi_\std),(\gamma,F_s)\right)\to\pi_n\left(\Emb(\NS^1,\NS^3),\gamma\right)$.

We will prove that these homomorphisms are never surjective for $n\geq 3$. Let us introduce the notion of simple space first that will turn out to be useful later, as well as two useful remarks.

\begin{definition}\label{simple}\cite[Def. 1.4.4]{MP}
    We say that a connected topological space $X$ is a \textbf{simple space} if its fundamental group $\pi_1(X)$ is abelian and acts trivially on all higher homotopy groups $\pi_n(X)$, $n\geq 2$.
\end{definition}

\begin{remark}\label{Hspace}
 The set of free homotopy classes of maps from $\NS^n$ to a connected topological space $X$, denoted by $[\NS^n,X]$, is in bijection with $\pi_n(X)/\pi_1(X)$, the set of orbits of the action of $\pi_1(X)$ on $\pi_n(X)$. In particular, if $X$ is a simple space, then $[\NS^n,X]$ is actually a group isomorphic to $\pi_n(X)$ (\cite[Lemma 1.4.2]{MP}). Examples of simple spaces are $H$-spaces and in particular connected Lie groups \cite[Prop. 1.4.3]{MP}, such as $\U(2)$, or simply connected spaces (since both conditions in Definition \ref{simple} are trivially satisfied), such as $\V\cong \NS^3\times\NS^2$. This will be useful later. 
\end{remark}

Another key remark that will be useful in the upcoming argument reads as follows.

\begin{remark}\label{MeasureOrigin}
Given $n\geq 3$ and a smooth $n$-sphere $\{\gamma^k\}_{k\in\NS^n}$ of embeddings based at $\gamma$, note that, by Lemma \ref{lemma:Clave_pin}, the smooth class $A=[\{\gamma^k\}_{k\in\NS^n}]\in\pi_n\left(\Emb(\NS^1,\NS^3),\gamma\right)$ represented by this sphere within $\pi_n\left(\Emb(\NS^1,\NS^3),\gamma\right)$ only depends on the evaluations of $\{\gamma^k\}_{k\in\NS^n}$ and its derivative at the origin; i.e. on the families of values $\{\gamma^k(0)\}_{k\in\NS^n}$ and $\{(\gamma^k)'(0)\}_{k\in\NS^n}$. In particular, by the isomorphism $\pi_n(\Emb(\NS^1,\NS^3),\gamma)\cong\pi_n(\V)$ given in Lemma \ref{lemma:Clave_pin}, we have that the smooth class $A\in\pi_n\left(\Emb(\NS^1,\NS^3),\gamma\right)$ is represented by $[\lbrace(\gamma^k(0),\frac{(\gamma^k)'(0)}{||(\gamma^k)'(0)||})\rbrace_{k\in\NS^n}]\in\pi_n(\V)$.
\end{remark}

Let us state the main theorem of this section and provide a proof taking into account all the tools at our disposal.

\begin{theorem}\label{t28} Let $n\geq 3$. The homotopy group homomorphisms 
\begin{equation}\label{map}
\pi_n\left(\FLeg(\NS^3,\xi_\std), (\gamma,F_s)\right)\rightarrow \pi_n\left(\Emb(\NS^1,\NS^3), \gamma\right)
\end{equation}

induced by the forgetful map  $\FLeg(\NS^3,\xi_\std)\rightarrow \Emb(\NS^1,\NS^3)$
are non-surjective for every smooth embedding $\gamma$ and every formal Legendrian representative $(\gamma,F_s)$.
\end{theorem}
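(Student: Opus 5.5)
The plan is to show that the image of the forgetful homomorphism factors, up to the isomorphism of Lemma \ref{lemma:Clave_pin}, through a homomorphism $\pi_n(\U(2))\to\pi_n(\V)$, and then to conclude with Lemma \ref{LackSurjectivity}. By Remark \ref{MeasureOrigin}, the class of an $n$-sphere $\{\gamma^k\}_{k\in\NS^n}$ in $\pi_n(\Emb(\NS^1,\NS^3),\gamma)$ is determined by the $n$-sphere of unit frames $k\mapsto(\gamma^k(0),(\gamma^k)'(0)/\|(\gamma^k)'(0)\|)$ in $\V$. So it suffices to understand which classes in $\pi_n(\V)$ arise as such frame spheres when $\{\gamma^k\}$ lifts to an $n$-sphere $\{(\gamma^k,F^k_s)\}$ of formal Legendrians.

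The key step is a homotopy of frame spheres built from the formal data. For each $k$, the path $s\mapsto(\gamma^k(0),F^k_s(0)/\|F^k_s(0)\|)$, $s\in[0,1]$, is a continuous path in $\V$; it starts at the frame of $\gamma^k$ at the origin ($F_0=(\gamma^k)'$) and ends at a frame whose second vector lies in $\xi_\std$. Since $F_s$ never vanishes and depends continuously on $k$, this gives a free homotopy of maps $\NS^n\to\V$ from the smooth frame sphere to the sphere $k\mapsto(\gamma^k(0),F^k_1(0)/\|F^k_1(0)\|)$. That second sphere takes values in the unit Legendrian frames, i.e.\ in $\iota(\U(2))\subset\V$, where a pair $(p,v)$ with $v\in\xi_p$ unit is identified with the matrix $(p\,|\,v)\in\U(2)$ exactly as in Proposition \ref{lem:HomotopyEquivalence}. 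The homotopy is only free, not based, because the basepoint frame moves along $s\mapsto F_s(0)$. This is harmless: $\V\cong\NS^3\times\NS^2$ is simply connected and $\U(2)$ is a connected Lie group, so both are simple spaces. By Remark \ref{Hspace}, free homotopy classes $[\NS^n,\cdot]$ therefore coincide with $\pi_n$ as groups for both. The upshot is that every class in the image of the map in Theorem \ref{t28} lies in the image of the homomorphism $\pi_n(\U(2))\to\pi_n(\V)$ induced by $\iota$, viewed through the identification $[\NS^n,\U(2)]\cong\pi_n(\U(2))$ and similarly for $\V$.

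I expect the main obstacle to be making this identification precise. First, we must check that $\iota_*$ on free classes agrees with the based homomorphism $\pi_n(\U(2))\to\pi_n(\V)$; this follows from naturality of the bijection in Remark \ref{Hspace} and from the triviality of the $\pi_1$-actions. Second, we must confirm that the isomorphism of Lemma \ref{lemma:Clave_pin} is compatible with passing to free classes; this holds because $\pi_1(\V)=0$, so based and free classes agree there. Once this is settled, the image of the forgetful homomorphism is contained in the image of a group homomorphism $\pi_n(\U(2))\to\pi_n(\NS^3)\oplus\pi_n(\NS^2)$. By Lemma \ref{LackSurjectivity}, no such homomorphism is surjective for $n\geq3$, which proves the theorem for every $\gamma$ and every formal representative $(\gamma,F_s)$.
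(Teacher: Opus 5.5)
Your proposal is correct and follows the paper's proof essentially step for step. You read off the class from the frame at $t=0$ (Remark~\ref{MeasureOrigin}), use the path $s\mapsto F^k_s(0)$ as a free homotopy of frame spheres into $\U(2)\subset\V$, pass between free and based classes via simplicity of $\U(2)$ and $\V$, and finish with Lemma~\ref{LackSurjectivity}. The only difference is cosmetic: you state the conclusion as a direct containment of the image in $\iota_*\pi_n(\U(2))$, whereas the paper assumes surjectivity and derives a surjection $\pi_n(\U(2))\to\pi_n(\V)$ as a contradiction.
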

\begin{proof}

Fix $n\geq 3$ and take an arbitrary smooth class $[A]\in\pi_n\left(\Emb(\NS^1,\NS^3),\gamma\right)\cong\pi_n(\V)$. Assume that the $\pi_n$-homomorphism (\ref{map}) induced by the forgetful map (\ref{eq:forgetful}) is surjective and let us derive a contradiction. Assume thus that there exists an $n$-sphere of formal Legendrians $\{(\gamma^k,F_s^k)\}_{k\in\NS^n}$ based at $(\gamma,F_s)\in\FLeg(\NS^3,\xi_\std)$ mapping to the class $[A]\in\pi_n\left(\Emb(\NS^1,\NS^3),\gamma\right)$ by the $\pi_n$-homomorphism (\ref{map}).

Recall that we can measure its smooth $\pi_n$-class at $t=0$ (Remark \ref{MeasureOrigin}). More precisely, if we consider the family of matrices $\{A^k\}_{k\in\NS^n}$ where $A^k=\left(\gamma^k(0)||\frac{(\gamma^k)'(0))}{||(\gamma^k)'(0))||}\right)$, then the smooth class of the sphere $\{(\gamma^k,F_s^k)\}_{k\in\NS^n}$ is precisely given by $[\{A^k\}_{k\in\NS^n}]\in\pi_n(\V)\cong\pi_n(\Emb(\NS^1,\NS^3),\gamma)$ (recall Remark \ref{MeasureOrigin}).

Regard $\{A^k\}_{k\in\NS^n}$ as a non-based sphere within the space $\V$ and thus representing a free homotopy class $[\{A^k\}_{k\in\NS^n}]\in[\NS^n,\V]$. By using the formal legendrian derivative we can (freely) homotope $\{A^k\}_{k\in\NS^n}$ to a new sphere that lies entirely in $\U(2)$. Indeed, consider the following explicit free homotopy of spheres:
\begin{equation}
    B_{s}:=\left\{\left(\gamma^k(0)\,|\,\frac{F^k_s(0)}{||F_s^k(0)||}\right) \right\}_{k\in\NS^n},\quad s\in[0,1].
\end{equation}
It connects the original sphere \[B_0=\left\{\left(\gamma^k(0)\,|\,\frac{F^k_0(0)}{||F_0^k(0)||}\right) \right\}_{k\in\NS^n}=\left\{\left(\gamma^k(0)\,|\,\frac{(\gamma^k)'(0)}{||(\gamma^k)'(0)||}\right) \right\}_{k\in\NS^n}\] with $B_1=\left\{\left(\gamma^k(0)\,|\,\frac{F^k_1(0)}{||F_1^k(0)||}\right) \right\}_{k\in\NS^n}\subset \U(2)$. The fact that the latter sphere lies entirely in $\U(2)$ follows from the fact that $F^k_1(0)\in(\xi_\std)_{\gamma^k(0)}$ and from the fact that $\xi_\std$ is defined as the distribution given by the complex tangencies (Eq. (\ref{eq1})). Consequently, the latter sphere represents a free homotopy class in $\U(2)$, i.e. $[\{(\gamma^k(0)\,|\,\frac{F^k_1(0)}{||F^k_1(0)||})\}]\in[\NS^n,\U(2)]$. Note that the homotopy $B_s$, $s\in[0,1]$ does not necessarily preserve the basepoints. For that reason we work with free homotopies instead, since by Remark \ref{Hspace} we obtain the commutative diagram from Figure \ref{conmutativo}.

We have shown that given a class $[A]\in\pi_n(\V)\cong[\NS^n,\V]$ (Remark \ref{Hspace}), there exists a free homotopy class $[\tilde{A}]\in[\NS^n,\U(2)]$ that represents the same free homotopy class $[A]\in[\NS^n,\V]$ under the inclusion map $i:\U(2)\hookrightarrow\V$. Since both $\U(2)$ and $\V$ are simple spaces, then Remark \ref{Hspace} implies that for $n\geq 3$ we have a surjective group homomorphism from  $\pi_n(\U(2))$ to $ \pi_n(\V)\cong \pi_n(\NS^3)\oplus \pi_n(\NS^2)$, contradicting Lemma \ref{LackSurjectivity}. Indeed, see the diagram in Figure \ref{conmutativo}. The upper horizontal arrow corresponds to the $\pi_n$-homomorphism induced by the inclusion. The vertical arrows are group isomorphisms that follow from Remark \ref{Hspace}, since $\U(2)$ and $\V$ are simple topological spaces. As a consequence, the surjectivity of the lower horizontal morphism implies the existence of a surjective group homomorphism upstairs. 
\begin{figure}[h!]
    \centering
   \[
\begin{tikzcd}
\pi_n\left(\U(2)\right) \arrow[d] \arrow[r]
  & \pi_n\left(\V\right) \arrow[d] \\
{[\NS^n, \U(2)]} \arrow[r] 
  & {[\NS^n,\V]}
\end{tikzcd}
\]
    \caption{Commutative diagram depicting part of the argument in the proof of Theorem \ref{t28}.}
    \label{conmutativo}
\end{figure}

The contradiction arises from assuming that the map $\pi_n\left(\FLeg(\NS^3,\xi_\std), (\gamma,F_s)\right)\rightarrow \pi_n\left(\Emb(\NS^1,\NS^3), \gamma\right)$ was surjective, thus yielding the claim.
\end{proof}

\textbf{Use of AI tools}. The author used
OpenAI's ChatGPT 5.6 model (accessed July and August 2026) as an auxiliary tool for proofreading and identifying errors, typos and points requiring further clarification in the article.

\end{document}